\theoremstyle{plain}
\newtheorem{theorem}{Theorem}
\newtheorem{corollary}{Corollary}
\newtheorem{lemma}{Lemma}
\newtheorem{proposition}{Proposition}
\theoremstyle{remark}
\newtheorem{definition}{Definition}
\newtheorem{assumption}{Assumption}
\theoremstyle{remark}
\newtheorem{remark}{Remark}
\DeclarePairedDelimiter\abs{\lvert}{\rvert}%
\DeclarePairedDelimiter\norm{\lVert}{\rVert}%
\def\ow{\overline{w}}
\let\oldabs\abs
\def\abs{\@ifstar{\oldabs}{\oldabs*}}
\def\de{\delta}
\def\cF{\mathcal{F}}
\def\pd{\partial}
\def\int{\mathrm{Int}}
\def\Nb{\overline{\mathbb{N}}}
\let\oldnorm\norm
\def\norm{\@ifstar{\oldnorm}{\oldnorm*}}
\newcommand{\R}{\mathbb{R}}
\newcommand{\E}{\mathbb{E}}
\newcommand{\PP}{\mathbb{P}}
\newcommand{\1}{\mathbbm{1}}
\DeclareMathOperator{\supp}{\mathrm{supp}}
\def\cH{\mathcal{H}}
\def\e{\epsilon}
\def\k{\kappa}
\title{Branched harmonic majorants: representations for multidimensional optimal stopping}
\author{John Moriarty\thanks{
School of Mathematical Sciences,
Queen Mary University of London,
j.moriarty@qmul.ac.uk}}
\date{\today}
\begin{document}
\usetikzlibrary{decorations.pathreplacing}
    \maketitle

\begin{abstract}

We construct the least superharmonic majorant of a continuous function $g$ on the $d$-dimensional unit ball ($d \geq 2$) via a canonical sequential scheme. While classical theory identifies this majorant with the value function of the optimal stopping problem for Brownian motion absorbed at the domain boundary, no comparable constructive approximation scheme has been available.

We introduce \textit{branched harmonic majorants}, obtained by arranging classical harmonic functions on smoothly bounded domains in a finite, depth-indexed branching structure, and prove two main results. First, the optimal stopping region is identified as the contact set between the gain function $g$ and the pointwise infimum of this family; the value function is recovered as the expected gain at the first exit time from the non-contact set. This yields a multidimensional generalisation of the Dynkin--Yushkevich concave-envelope theorem in which affine functions are replaced by branched harmonic majorants. Second, truncation in the branching depth produces a decreasing sequence of envelopes that converges pointwise to this infimum, yielding an explicit approximation scheme not present in classical formulations.

Analytically, the branching structure relaxes the global majorisation constraint to a local constraint imposed on a decreasing sequence of non-contact sets, yielding a representation of the Perron envelope in terms of harmonic functions on smoothly bounded domains. Probabilistically, the construction corresponds to the sequential composition of stopping times and overcomes the localisation obstruction arising from the thinness of Brownian paths in dimensions $d \geq 2$.
\end{abstract}
       
\section{Introduction}
\label{sec:statement}

Superharmonic functions admit both analytic and probabilistic representations. While the classical PDE approach emphasises domain-wide regularity, the probabilistic viewpoint offers pathwise characterisations via stopping times. In this paper we introduce a new potential-theoretic framework of \emph{branched harmonic majorants} that naturally synthesises these two perspectives (see Figure~\ref{fig:1}).

The key idea is to replace global majorisation by a sequential, localised procedure that mirrors the structure of stopping times. Rather than enforcing domination of the gain on the entire domain, we impose the constraint only on the current non-contact set and update this set iteratively, producing a decreasing sequence of domains. This leads naturally to a branching structure: each stage corresponds to a stopping decision, and the resulting majorants are obtained by composing harmonic patches along sample paths. This construction yields a canonical, depth-indexed approximation scheme for the least superharmonic majorant of a continuous function $g$, and correspondingly for the value function and optimal stopping times of multidimensional Brownian motion.

\afterpage{ 
    \thispagestyle{empty}
\begin{figure}[p]
    \centering
        \resizebox{13.7cm}{!}{
\begin{tikzpicture}
\node[anchor=south west,inner sep=0] (image) at (0,0) {\includegraphics{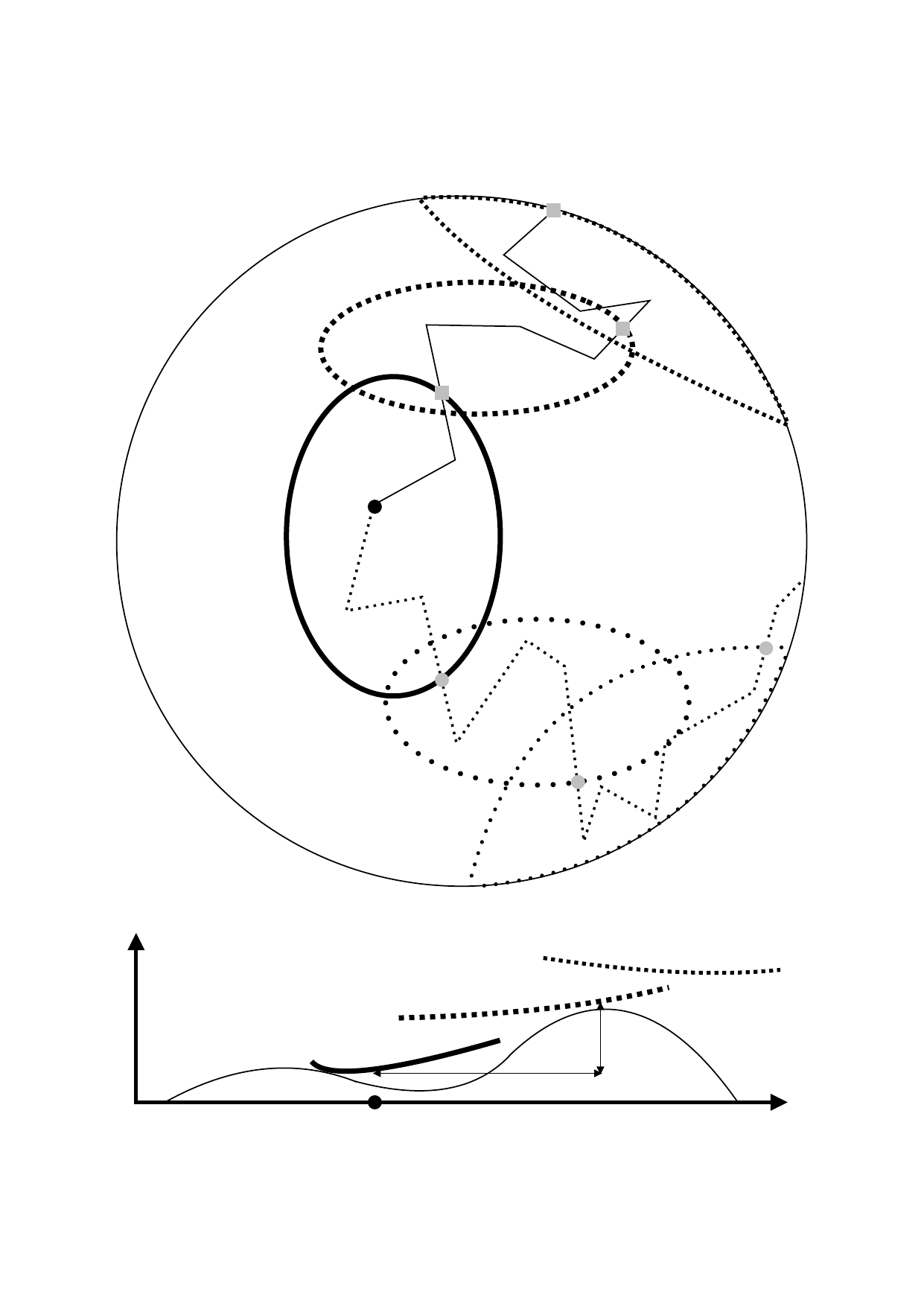}};
\begin{scope}[x={(image.south east)},y={(image.north west)}]
 \node[scale=1.5] at (0.43,0.6) {$x$};
 \node[scale=1.5] at (0.65,0.5) {$d(h_2^d)$};
 \node[scale=1.5] at (0.505,0.58) {$d(h_3)$};
 \node[scale=1.5] at (0.505,0.47) {$u_3$};
 \node[scale=1.5] at (0.65,0.415) {$u_2$};
 \node[scale=1.5] at (0.555,0.34) {$d(h_1^d)$};
 \node[scale=1.5] at (0.81,0.515) {$u_1$};
 \node[scale=1.5] at (0.305,0.795) {$\Lambda$};
 \node[scale=1.5] at (0.62,0.85) {$v_1$};
 \node[scale=1.5] at (0.505,0.705) {$v_3$};
 \node[scale=1.5] at (0.39,0.735) {$d(h_2)$};
 \node[scale=1.5] at (0.785,0.715) {$d(h_1)$};
 \node[scale=1.5] at (0.71,0.745) {$v_2$};
 \node[scale=1.5] at (0.78,0.2) {$g$};
 \node[scale=1.5] at (0.15,0.3) {$h_i, g$};
 \draw[-] (0.14,0.266) -- (0.155,0.266);
 \node[scale=1.5] at (0.12,0.266) {$g^*$};
 \node[scale=1.5] at (0.39,0.195) {$h_3$};
 \node[scale=1.5] at (0.54,0.245) {$h_2$};
 \node[scale=1.5] at (0.69,0.28) {$h_1$};
 \node[scale=1.3] at (0.61,0.167) {$d_1$};
 \node[scale=1.3] at (0.67,0.2) {$d_2$};
\end{scope}
\end{tikzpicture}
}
    \caption{Visualisation of a branched harmonic majorant 
    dominating $g$. Harmonic functions $h_i$ on smoothly bounded subdomains $d(h_i)$ are assembled recursively along sample paths. The upper panel displays the construction along two representative sample paths. The lower panel plots cross-sections of the constituent harmonic patches $h_i$, illustrating that each locally majorises the gain function $g$; see Remark~\ref{rem:cap} for details.} 
\label{fig:1}
    
\end{figure}
    \clearpage 
}

The least superharmonic majorant (LSM) of \(g\) on a domain \(\Lambda\) is a fundamental object in potential theory, the obstacle problem, and optimal stopping theory. It is characterised analytically as the smallest superharmonic function dominating \(g\), and probabilistically as the value function
\[
V(x) := \sup_{\tau \in \mathscr{T}} \mathbb{E}^x[g(X_\tau)],
\]
where \(\mathscr{T}\) is the collection of stopping times for Brownian motion starting at \(x\). In one dimension, Dynkin and Yushkevich \cite{Dynkin_Yushkevich} reduced the problem to constructing the smallest concave majorant, yielding an explicit representation. In dimensions $d \ge 2$, however, no comparable constructive or sequential representation exists. The fundamental obstruction stems from the thinness of Brownian paths: sets where $g$ is large can have arbitrarily small Newtonian capacity, preventing single-step harmonic majorisation from capturing the value function and obstructing sequential constructions based on global constraints (see Figure \ref{fig:placeholder}). 

We resolve this difficulty by introducing branched harmonic majorants, which are generated by iterated compositions of stopping times. These objects are harmonic functions on subdomains with smooth boundaries, assembled along a tree structure induced by successive stopping decisions (Figure \ref{fig:1}). Our main result identifies the optimal stopping region as the contact set of the gain function $g$ with the pointwise infimum over all branched harmonic majorants (the {\it limit envelope} $w_\infty$). The value function $V$ is recovered as the {\it limit balayage} $\ow_\infty$, or the expected gain at the first exit time from the non-contact set (see Theorem~\ref{pro:representation_4param_s}). This yields a multidimensional analogue of the Dynkin--Yushkevich representation. 

The branched structure carries a natural depth parameter. The associated majorants $w_n$ form a decreasing sequence that converges pointwise to the limit envelope $w_\infty$ (see Figure \ref{fig:branched-majorants}), yielding a canonical, depth-indexed approximation scheme with a clear probabilistic interpretation. In contrast to classical methods (Perron’s method or transfinite balayage), which take the infimum over large unstructured classes of superharmonic functions, our approach identifies a structured subclass of harmonic majorants generated directly by compositions of stopping times. This restriction is not merely technical: it introduces the depth parameter and enables a progressive relaxation of the majorisation constraint along a decreasing sequence of non-contact sets, a feature absent from previous frameworks. Thus, the LSM can be recovered from a class of objects that directly encode the probabilistic structure of stopping times, rather than through an abstract extremal characterisation.

\begin{figure}
    \centering
    \begin{tikzpicture}
    \node[anchor=south west,inner sep=0] (image) at (0,0) 
        {\includegraphics[width=0.8\linewidth]{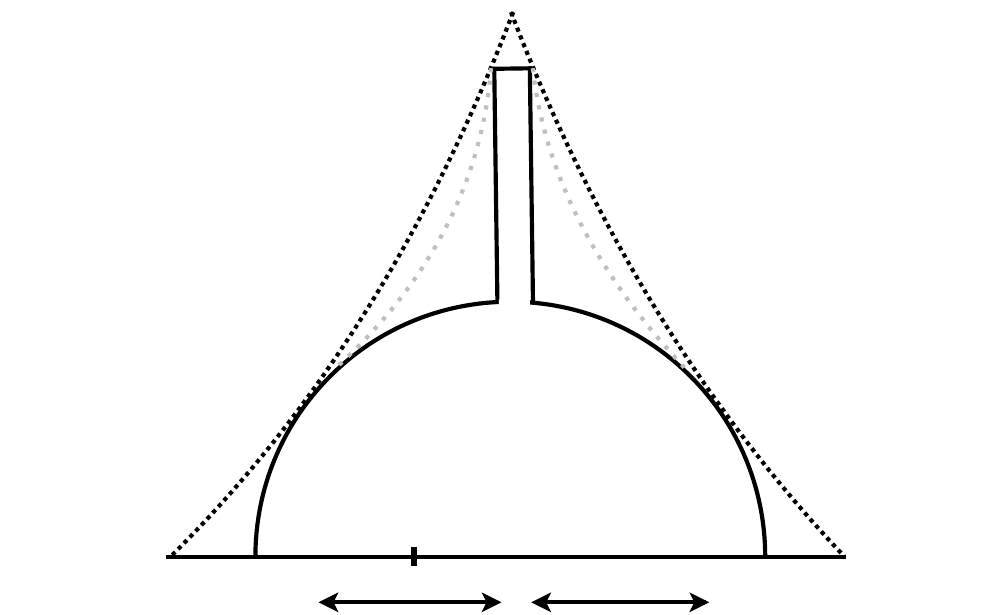}};
    \begin{scope}[x={(image.south east)},y={(image.north west)}]
     \node[scale=1] at (0.445,0.55) {$V$};
     \node[scale=1] at (0.345,0.545) {$w_1$};
     \node[scale=1] at (0.57,0.455) {$g$};
     \node[scale=1] at (0.42,-0.03) {$C_1^x$};
     \node[scale=1] at (0.63,-0.03) {$C_1^x$};
     \node[scale=1] at (0.41,0.06) {$x$};
    \end{scope}
    \end{tikzpicture}    
    \vspace{3mm}    
    \caption{Cross-section of the gain function $g$ (solid) studied in 
    Section~\ref{sec:sball}. Majorisation of the high-gain, low-capacity central spike causes 
    the unbranched envelope $w_1$ (square black markers) to strictly dominate the value function $V$ (circular grey markers) on the annular component $C_1^x$ of the non-contact region $C_1 := \{w_1>g\}$. Branched harmonic majorants circumvent this difficulty by relaxing majorisation from  a global constraint to one holding on $C_1$, then on $C_2:=\{w_2>g\}$ with $w_2 \leq w_1$, and along a decreasing sequence of non-contact sets $C_n$ induced by the pointwise decreasing envelopes $w_n$ until convergence, see Figure \ref{fig:branched-majorants} for a schematic representation.}
    \label{fig:placeholder}
\end{figure}

\subsection{Background and context}
\label{sec:literature}

\paragraph{The Perron method and balayage.}
The Perron envelope constructs the LSM as a pointwise infimum over superharmonic 
majorants:
\[
w(x) = \inf\bigl\{h(x) : h \text{ superharmonic on } \Lambda,\ h \geq g\bigr\}.
\]
This provides an existence-based construction: the object is well defined without any prior assumption of existence. It does not, however, supply a canonical approximating sequence. Poincar\'{e}'s balayage procedure is more constructive in spirit: one repeatedly replaces the values of $g$ on small balls with their harmonic averages, and the iterates converge monotonically to the r\'{e}duite, the balayage of $g$ onto the non-contact set $\{w > g\}$. In general this convergence is realised only at a countable ordinal, so no canonical approximating sequence is available in general.

\paragraph{Wiener exhaustion.}
Wiener's solution to the Dirichlet problem via exhaustion by regular subdomains is constructive in an approximation-theoretic sense: the problem is solved on a monotone increasing sequence of regular subdomains, and the resulting harmonic functions converge to the Perron solution. The approximation is indexed by subdomains rather than branching depth, and proceeds from below rather than above. 

\paragraph{Optimal stopping in one and higher dimensions.}
For regular diffusions in one dimension, Dayanik and Karatzas 
\cite{Dayanik_Karatzas} characterise the value function via concave majorants, 
with extensions to nonlinear settings in \cite{km}. In higher dimensions, 
analytic approaches based on variational inequalities, PDE theory, and viscosity 
solutions provide general existence and regularity results 
\cite{bensoussan1982stochastic, Reikvam01011998, peskir2019continuity}. 
Additional structure such as monotonicity or symmetry permits explicit solutions 
in special cases \cite{Peskir2006, CHRISTENSEN20192561, 10.1214/13-AAP956}. 
Deep learning methods have been applied to obtain numerical approximations in 
high-dimensional settings \cite{becker2019deep, lauriere2025deep}. None of 
these approaches yields a constructive representation of the LSM in terms of 
tractable building blocks with a natural approximation depth.

\paragraph{The present paper.}
The construction introduced here can be viewed as a structured refinement of 
the Perron framework. Rather than taking the infimum over all superharmonic 
majorants, we restrict to a subclass of harmonic majorants defined on subdomains 
with smooth boundaries and generated by iterated compositions of stopping times. 
These compositions induce a recursive, tree-like structure reflecting the 
sequential nature of stopping decisions, and the resulting family is sufficiently 
rich to recover $V$. The internal tree structure provides 
the depth parameter that governs approximation, yielding a monotone scheme 
converging pointwise and sequentially (see Figure \ref{fig:branched-majorants}), a feature absent from both the 
Perron method and balayage in general.

\begin{figure}[!htbp]
\centering
\begin{tikzpicture}[
    node distance=1.15cm and 1.6cm,
    mainbox/.style={
        rounded corners=8pt,
        text width=6.3cm,
        align=center,
        minimum height=1.1cm,
    },
    w1box/.style={mainbox, fill=blue!20},
    w2box/.style={mainbox, fill=blue!30},
    w3box/.style={mainbox, fill=blue!40},
    vbox/.style={mainbox, fill=teal!40},
    gbox/.style={mainbox, fill=gray!15},
    constraint/.style={
        align=center,
        text width=3.0cm
    },
    sidebox/.style={
        draw=blue!55,
        dashed,
        rounded corners=6pt,
        text width=3.8cm,
        align=center,
        minimum height=0.95cm,
    },
    arrow/.style={-Latex, thick, gray!95},
    dashedline/.style={dashed, gray!95, thick},
    columnhead/.style={
        font=\bfseries,
        align=center,
        text width=4cm
    }
]

\node[columnhead] at (0, 2.1) {Majorant};
\node[columnhead] at (5.9, 2.2) {Non-contact set};
\node[columnhead] at (9.7, 2.2) {Constraint\\localisation};

\node[w1box] (w1) at (0,0) {$w_1 = \inf \cH_1$};
\node[w2box, below=of w1] (w2) {$w_2 = \inf \cH_2$};
\node[w3box, below=of w2] (w3) {$w_3 = \inf \cH_3$};
\node[vbox, below=2.75cm of w3] (v) {$w_\infty = \inf \cH_\infty$};
\node[gbox, below=0.9cm of v] (g) {$g$ (gain function)};

\node[font=\Large, rotate=90] at ($(w1.south)!0.5!(w2.north)$) {$\leq$};
\node[font=\Large, rotate=90] at ($(w2.south)!0.5!(w3.north)$) {$\leq$};
\node[xshift=3cm, yshift=0cm, fill=gray!40] at ($(w3.south)!0.5!(v.north)$) {Theorem \ref{pro:representation_4param_s}};

\node[sidebox, right=0.7cm of w1] (c1) {$C_1$};
\node[sidebox, right=0.7cm of w2] (c2) {$C_2$};
\node[sidebox, right=0.7cm of w3] (c3) {$C_3$};
\node[sidebox, right=0.7cm of v] (cv) {$C_\infty$ \\ (yields $V$)};

\draw[arrow] (w3.south) -- (v.north);
\draw[arrow] (c3.south) -- (cv.north);

\node[font=\Large, rotate=90] at ($(c1.south)!0.5!(c2.north)$) {$\subset$};
\node[font=\Large, rotate=90] at ($(c2.south)!0.5!(c3.north)$) {$\subset$};

\node[constraint, right=0cm of c1] (init) {global};
\node[constraint, right=0cm of c2] (relax) {on $C_1$};
\node[constraint, right=0cm of c3] (relaxm) {on $C_2$};
\node[constraint, right=0cm of cv] (relaxf) {on $C_\infty$};

\draw[dashedline] (w1.east) -- (c1.west);
\draw[dashedline] (w2.east) -- (c2.west);
\draw[dashedline] (w3.east) -- (c3.west);
\draw[dashedline] (v.east) -- (cv.west);

\end{tikzpicture}
\caption{Sequential construction of the value function $V$ via branched harmonic majorants. The majorisation constraint is progressively relaxed along the decreasing non-contact sets $C_n:=\{w_n>g\} \searrow C_\infty$, generating a monotone sequence $w_n \searrow w_\infty$ pointwise. Convergence properties are established in Theorem \ref{pro:representation_4param_s}.}
\label{fig:branched-majorants}
\end{figure}

\section{Branched envelopes}\label{sec:setting}

We begin by establishing the probabilistic framework. 
Throughout, we work in $\mathbb{R}^d$ with $d \geq 2$, and write
\[
\Lambda = \{x\in\mathbb{R}^d:\,|x|<1\}
\]
for the open unit ball. Let $B=(B_t)_{t\ge 0}$ be standard Brownian motion on the
canonical filtered probability space $(\Omega,\mathcal{F},(\mathcal{F}_t)_{t \geq 0},\PP)$, $\Omega = C(\mathbb{R}_{\ge0};\mathbb{R}^d)$.
For $x\in\Lambda$ define
\[
X_t^x = x+B_{t\wedge\tau^x_{\partial\Lambda}},\text{ where } 
\tau^x_A := \inf\{t\ge0:\,X^x_t\in A\},
\]
so that $X^x$ is absorbed on the unit sphere $\pd \Lambda$. 
\begin{assumption}\label{ass:breg}
    The gain function $g$ is continuous, nonnegative and compactly supported in $\Lambda$. 
\end{assumption}
The optimal stopping value is
\[
V(x):=\sup_{\tau\in\mathscr{T}}\mathbb{E}[g(X^x_\tau)],
\qquad x\in\Lambda,
\]
\noindent and, because of absorption, the admissible stopping times $\mathscr{T}$ may be taken to lie in $[0,\tau^x_{\partial\Lambda}]$. (Note that nonnegativity of $g$ is a mild restriction. Adding a constant shifts the value function but does not affect the optimal stopping time, by translation invariance.) 
For $t\geq 0$ let $\theta_t$ be the shift operator $\theta_t \colon \Omega \to \Omega$ given by
\begin{align*}
        \theta_t(\omega)(s) &= \omega(t+s), \qquad s \geq 0.
\end{align*}

For each $A \subset \R^d$, write $\overline{A}$ for its closure, $A^c$ for its complement, $\pd A = \overline{A} \cap \overline{A^c}$ for its boundary, and $\int(A) = A \setminus \pd A$ for its interior. Write $B(u;r)$ and $B^\circ(u;r)$ for the closed and open balls of radius $r$ centred at $u$, respectively, and $B^\circ(u;a,b)$ for the open annulus centred at $u$ of inner radius $a$ and outer radius $b>a$.

\subsection{Smoothly bounded harmonics}\label{sec:dipo}

Let $\Gamma$ denote the family of open connected subsets
$\gamma\subset\Lambda$ with smooth boundary in $\R^d$. 
For $\gamma\in\Gamma$ and continuous nonnegative boundary data $f$, we consider the harmonic extension of $f$ given by the Poisson representation
\[
h^\gamma_f(x)=
\begin{cases}
    \mathbb{E}[f(X^x_{\tau_{\partial\gamma}})], & x\in\overline{\gamma},\\
    \infty, & x\notin\overline{\gamma}.
\end{cases}
\]
(The value $\infty$ trivially ensures majorisation off-domain; we omit the superscript $x$ from stopping times when there is no risk of confusion). These harmonics are continuous on $\overline{\gamma}$ \cite[p. 251]{Karatzas_Shreve}. By the maximum principle it is unnecessary to consider unbounded harmonics, and we take only harmonics from the following {\it base} family $\cH_0$:
\begin{align*}
\cH_0 &= \big\{\,h^\gamma_f:\, f\le g^*\big\},
\end{align*}
for a fixed constant $g^*> \bar{g}:= \max_{x \in \Lambda} g(x)$. These are harmonic patches, defined on subdomains $\gamma \in \Gamma$ with smooth boundaries inside $\bar\Lambda$, and truncated at the level $g^*$. They will later be extended to classes $\cH_n$, $n \geq 1$, of branched harmonic majorants with branching depth $n-1$. 

We first develop the class $\cH_1$, which is unbranched (branching depth 0). Owing to its role as a global majorant (rather than a merely local one) of the gain function, we require each $h \in \cH_1$ to take the value $g^*$ on its free boundary portion inside $\Lambda$ (while boundary values on $\pd \Lambda$ remain unrestricted). That is, set
\begin{align}\label{eq:defch1}
\cH_1 &= \cH_0 \cap \big\{\,h^\gamma_f:\,
f|_{\partial\gamma\cap\Lambda}=g^*
\big\}.
\end{align}
Specifically, this restriction is used to establish the one-sided excessivity property in Lemma~\ref{lem:parti}.

For each $h = h^\gamma_f \in \cH_0$ write $d(h) := \gamma$ and $\pd h:= \pd(d(h))$, and let $\tau_h^x$ denote the first hitting time by $X^x$ of $\pd h$.

\subsection{Unbranched envelope}\label{sec:canen}

We define the unbranched envelope
\begin{align}\label{eq:w1}
w_1(x)=\inf\{h(x):\,h\in \cH_1,\,h\ge g\},\qquad x\in\Lambda,
\end{align}
together with its non-contact set
\[
C_1=\{x\in\Lambda:\,w_1(x)>g(x)\},
\]
and the {\it unbranched boundary} $\pd C_1$. Define the {\em unbranched balayage} $\overline{w}_1$ to be the average gain over the unbranched boundary exit measure:
\begin{align}\label{eq:defwb0}
    \overline{w}_1(u) = \E\left(g\left(X^u_{\tau_{\partial C_1}}\right)\right).
\end{align} 

For $x\in C_1$ let $C_1^x$ denote the connected component of $C_1$ containing $x$. As illustrated in Figure~\ref{fig:placeholder}, the unbranched envelope $w_1$ may be loose (i.e., not harmonic at points where it strictly exceeds the gain) and may thereby strictly dominate the value function, motivating Definition \ref{def:pb3} in the next section.

\subsection{Branched harmonic majorants}\label{sec:bpp}

{\it Branched harmonic majorants} model majorisation at successive stopping times. By performing balayage over subdomains, they allow a base harmonic patch to be iteratively extended through the addition of contiguous new patches at its free boundary points (see Figure \ref{fig:1}), without requiring derivative matching at the interfaces.

Such branching precludes $C^1$ regularity across interfaces and limits the construction to weak differentiability. This is natural in the present context as, even in one dimension, the value function of an optimal stopping problem with continuous gain is typically not $C^1$, but only locally Lipschitz, equivalently belonging to $W^{1,\infty}_{\mathrm{loc}}$. By restricting to majorants that are globally $M$-Lipschitz, we ensure that the resulting lower envelope inherits this uniform Lipschitz bound $M$. Consequently, every $h$ in $\cH_0$ (and hence $\cH_1$) satisfies 
\[
\|\nabla h\|_{L^\infty(d(h))}\le M.
\]
In view of Remark~\ref{lem:wusc1}, we choose $M \geq g^*/d_H(\supp(g),\partial\Lambda)$ to ensure that the unbranched envelope vanishes on the boundary $\partial\Lambda$.

Each patch $h = h^\gamma_f \in \mathcal{H}_0$ represents the harmonic-measure average of the Dirichlet boundary data $f$ over $\partial\gamma$.  In probabilistic terms, $h$ denotes the expected payoff under the boundary data $f$ when the process is stopped at the first exit time $  \tau_{\partial\gamma}  $ from $  \gamma  $. Provided every boundary value is attainable in the underlying optimal stopping problem (i.e., can be realised as an average over a suitable stopping time), such payoffs can be achieved via suitable compositions of stopping rules. Branched harmonic majorants precisely encode this recursive composition mechanism. Thanks to its contact with the gain function, the lower envelope guarantees the global attainability of its values.

To obtain a global superharmonic envelope of the gain function, these harmonic-measure representations must be extended beyond the base patch. Since the state space is truncated at the unit sphere $\pd \Lambda$, and the value function is truncated (bounded) by the maximal gain, it suffices to perform extensions only where these truncations are inactive. We therefore define the \emph{interior boundary} $\partial_0(h)$ of $h$ as
\[
\partial_0(h):=\{u\in\partial h\setminus \pd\Lambda:\; f(u)<g^*\}.
\]
To each point on this interior boundary, we attach another harmonic patch whose domain is contiguous with the preceding one. Through this recursive attachment, the branched structure of the majorant is formally realised.

We do not require the extension patches to satisfy matching conditions for the boundary values or their derivatives with respect to the preceding patch. Instead, the extensions are controlled by imposing a uniform bound $\|\cdot\|$ on the value-matching error, which is subsequently taken to zero in the limit. In the absence of strict value or derivative continuity between patches, the resulting object is characterised as a branched structure of harmonic patches supported on the interior boundary of the base patch. This structure admits a natural probabilistic interpretation as effectively `covering the sample path with patches', which we formalise in Algorithm~\ref{alg:2} via an explicit extension procedure originating from the base patch.

\begin{definition}[Branched harmonic majorants $\mathcal{H}_n$]\label{def:pb3} 
Let $M \geq \frac{g^*}{d_H(\supp(g),\partial\Lambda)}$. Define $\mathcal{H}_0$ and $\mathcal{H}_1$ as in Section \ref{sec:dipo}, with the additional requirement that their elements are {\it $M$-Lipschitz}. Recursively, for $n\ge 2$, set
\begin{align*}
\mathcal{H}_n&=\mathcal{H}_{n-1}\;\cup\;
\{(h,\kappa):\, h\in\mathcal{H}_0,\;\kappa\in K^{h}_{\,n-1}\},
\end{align*}
where $K^{h}_{n-1}$ is the set of {\it branched extensions} $\k$ of the base domain $h$: 
\[
\kappa:\partial_0(h)\to\mathcal{H}_{n-1}, \qquad 
u\mapsto \kappa_u,\]  
satisfying the contiguity requirement that $u\in d(\kappa_u)$ for all $u\in\partial_0(h)$. Also, write
$\mathcal{H}_\infty= \, \uparrow \lim_{m \to \infty}\cH_m$.

\begin{remark}\label{rem:meas}
Assuming that each space $\mathcal{H}_{n-1}$ is endowed with a suitable topology 
(e.g., a uniform metric inducing uniform convergence on compact subsets of its domain), 
we require that each extension map $\kappa \in K^h_{n-1}$ is Borel measurable. 
This ensures that, for any stopping time $\tau$ and Brownian path $X^x$, the random patches 
generated along the path by Algorithm~\ref{alg:2} are measurable.
\end{remark}

The unbranched harmonics $\cH_1$ are understood as real functions on $\R^d$. For the branched harmonic majorants $h_n=(h,\kappa)\in\cH_\infty \setminus \cH_1$ we may define (recursively, as appropriate): 
\begin{enumerate}[label=(\roman*)] 
    \item \textbf{Domains, pointwise evaluation:} $d(h_n)=d(h)$ and $h_n(u)=h(u)$ for $u\in d(h)$,
    \item \textbf{Inequalities:} 
    $h_n\ge g$ if $h\ge g$ and $\kappa_v\ge g$ for every $v\in\partial_0(h)$,    
    \item \label{ut0} \textbf{Matching error:} 
    \label{def:me} the matching error bound $\|\cdot\|$ by
\begin{align}\label{eq:defchnorm}
\Delta (h_n) &= \sup_{v\in\partial_0(h)}\bigl|h(v)-\kappa_v(v)\bigr|, \\
\|h_n\|
&= \Delta (h_n) \,+\,
\sup_{v\in\partial_0(h)}\|\kappa_v\|,\label{eq:defchnorm2}
\end{align}
and set $\Delta(h)=\|h\|=0$ when $h\in\mathcal{H}_1$. Declare $h_n$ \emph{continuous} if $\|h_n\|=0$,   
    \item \label{ut} \textbf{Upward translation:} For $h_n\in\mathcal{H}_n$ and $c>0$, $h_n+c$ is defined by adding $c$ to the values taken by each patch of the branched harmonic $h_n$ (again truncating at the value $g^*$). Note that $\|h_n+c\|\le\|h_n\|$ due to truncation.
\end{enumerate}
\end{definition}

\subsection{Branched envelopes}\label{sec:benv}

For $n \in \Nb := \mathbb{N} \cup \{\infty\}$, the \emph{$n$th envelope} $w_n$, \emph{$n$th non-contact set} $C_n$, and \emph{$n$th boundary} $\partial C_n$ are defined analogously to Section \ref{sec:canen} 
by replacing $\cH_1$ with $\cH_n$ and letting the matching error tend to zero. More precisely, the $n$th envelope is
\begin{align}\label{eq:nthenv}
w_n(x)= \lim_{\e \to 0}\inf\{h(x):\,h\in \cH_n,\,h\ge g,\ \|h\|<\e\},\qquad x\in\Lambda,
\end{align}
the $n$th non-contact set is
\[
C_n=\{x\in\Lambda:\,w_n(x)>g(x)\},
\]
the $n$th boundary is $\pd C_n$, and $C_n^x$ denotes the connected component of $C_n$ containing $x \in C_n$. When $n=\infty$ we refer respectively to the \emph{limit} envelope / non-contact set  / boundary.

To ensure that branched envelopes respect the boundary of the domain, we first establish a barrier-type lemma. This result demonstrates that even the coarsest (unbranched) envelope is constrained to vanish at the boundary $\pd \Lambda$.

\begin{lemma}[boundary vanishing]\label{lem:wusc}
The unbranched envelope is null on $\partial\Lambda$.
\end{lemma}

\begin{proof}
For $v\in\mathbb{R}^d$, $z>0$ and $c\in\mathbb{R}$ consider the affine harmonic
\begin{equation}\label{eq:hec}
h_{v,z,c}(u) =
\begin{cases}
\dfrac{c-u\!\cdot\! v}{z}, & \text{if }\dfrac{c-u\!\cdot\! v}{z}\le g^*,\\[1.2ex]
\infty, & \text{otherwise},
\end{cases}
\qquad u\in\Lambda.
\end{equation}
Then $h_{v,z,c}\in\mathcal{H}_1$ provided   
$h_{v,z,c}\ge 0$ on $\Lambda$. Its domain is
\[
d(h_{v,z,c})=\Bigl\{u\in\Lambda:\,\frac{c-u\!\cdot\! v}{z}<g^*\Bigr\}.
\]
Fix $x\in\partial\Lambda$, set $\delta := d_H(\supp(g),\partial\Lambda)>0$ where $d_H$ denotes the
Hausdorff distance, and choose
\[
h = h_{x,\;\delta/g^*,\;1}.
\]
Then $h$ is affine with graph passing through $(x,0)$, and
\[
d(h)=\{u\in\Lambda:\,u\!\cdot\! x > 1-\delta\}.
\]
Since $\supp(g)$ lies at distance $\delta$ from $\partial\Lambda$, we
have $h>0=g$ on $d(h)$. Thus $w_1(x)\le h(x)=0$, while by definition $w_1\ge g=0$ on $\partial\Lambda$. Hence $w_1(x)=0$. 
\end{proof}

\begin{remark}\label{lem:wusc1}
    The affine majorants $h$ in the above proof satisfy $|\nabla h|=\frac{g^*}{d_H(\supp(g), \pd \Lambda)}$. 
\end{remark}

\subsection{A motivating example: failure of unbranched harmonics for low-capacity spikes}
\label{sec:sball}

We present an example which exhibits the failure of the unbranched harmonic envelope and its resolution via iterative relaxation of constraints.

A fundamental obstruction to unbranched harmonic envelopes in dimensions $d \geq 2$ arises from the presence of regions where the gain is large but the Newtonian capacity is small. Harmonic rigidity, as expressed through Harnack-type estimates, forces harmonic majorants to remain elevated throughout a neighbourhood of such regions. By contrast, Brownian motion spends negligible time in sets of small capacity, and the corresponding expected payoff does not exhibit this elevation. This discrepancy results in harmonic envelopes systematically overestimating the value function.

In this example, we emphasise the underlying ideas and defer rigorous arguments to later sections. The unbranched envelope $w_1$ strictly dominates the unbranched balayage $\overline{w}_1$ of Section \ref{sec:canen}. In particular, $w_1$ fails to be harmonic on its non-contact set $C_1$, and hence cannot coincide with the least superharmonic majorant of the gain (equivalently, the value function).

This observation motivates the introduction of branched harmonic majorants. By allowing piecewise harmonic functions subject to local constraints, branching reduces the rigidity inherent in Harnack-type estimates and yields sharper envelopes. From a probabilistic perspective, branched envelopes encode compositions of stopping times.

\subsubsection{Failure of unbranched harmonics}\label{sec:fail}

For $\e \in [0,\frac 1 2)$ we consider a rotationally symmetric gain function
$g^\e$ featuring a central spike of radius $\e$.  While $g^\e$ is defined below as a discontinuous function for ease of exposition, we note in Remark \ref{rem:rem2} that it can be made continuous via mollification without altering the core difficulty. Set
\begin{align}
    g^\e(x) = \begin{cases}
        1, & |x| \leq \e, \\
        \sqrt{\frac 1 4 - |x|^2}, & |x| \in (\e,\frac 1 2), \\
        0, & \text{ otherwise.}
    \end{cases}
\end{align}
Let $R \in (0,1)$. By Harnack's inequality each nonnegative harmonic function $u$ on $\Lambda$ satisfies
\[
\sup_{x \in B^\circ(0;R)}u(x) \leq C_R \inf_{x \in B^\circ(0;R)}u(x), 
\]
where $C_R$ depends only on $R$ and $C_R \downarrow 1$ as $R \downarrow 0$. We proceed in two stages, considering the cases (i) $\e=0$ and then (ii) $\e \downarrow 0$.

\begin{enumerate}
\item[(i)] Take $\e=0$, choose $R \in (0,\frac 1 2)$ small enough that the Harnack constant $C_R < 5/4$, let $y \in B^\circ(0;0,R)$, and suppose there exists $h \in \cH_1$ with $h \geq g$ and $h(y)<g(y) + \frac 1 4$. Since $g(y) \leq \frac 1 2$, the assumption gives $h(y)< \frac 3 4$, and Harnack’s inequality yields
\[
h(0) \leq \sup_{x \in B^\circ(0;R)}h(x) \leq C_R\cdot \frac 3 4
 < 1 = g(0),
\]
contradicting $h \geq g$. Since $y$ was arbitrary, we conclude that $B^\circ(0;0,R) \subset C_1$. Further, by rotational symmetry, the connected component of $C_1$ containing $B^\circ(0;0,R)$ is the annular region $B^\circ(0;0,\Delta)$ for some $\Delta \in (R, 1]$.

    \item[(ii)] For $\e>0$ we have $g^\e \geq g^0$ hence $w_1^\e \geq w_1^0$ and $C_1^\e \supset C_1^0$ (where the superscript $\e$ denotes objects derived from the gain function $g^\e$). For $\e > 0$, the above argument gives that $C_1^\e$ has an annular connected component $\gamma^\e:=B^\circ(0;\e,\Delta^\e)$ for some $\Delta^\e \in (0,1]$, where $\Delta^\e$ decreases to some limit $\underline{\Delta}$ as $\e \to 0$. Set $\gamma^{\underline{\Delta}}=B^\circ(0;0,\underline{\Delta})$.    

    As $\e \to 0$, $g^\e \to g^0$ pointwise and $\pd \gamma^\e \to \pd \gamma^{\underline{\Delta}}$ in the Hausdorff metric, and the associated exit times converge almost surely. Thus for each $y \in B^\circ(0;0,\underline{\Delta})$, bounded convergence applied to the Poisson representation of harmonic functions gives 
    \begin{align*}
    \overline{w}_1^\e(y) &= \E[g^\e(X^y_{\tau_{\pd \gamma^\e}})]
    \to \E[g^0(X^y_{\tau_{\pd \gamma^{\underline{\Delta}}}})] = \E[g^0(X^y_{\tau_{\pd B(0;\underline{\Delta})}})] \\
    &= \sqrt{\frac 1 4 - \min\left(\frac 1 2,\underline{\Delta}\right)^2} < g(y) \leq w_1^\e(y),
    \end{align*}
    as $\e \downarrow 0$, where the second equality follows from the polarity of $\{0\}$, so that Brownian motion exits $\gamma^{\underline{\Delta}}$ through $\pd B(0;\underline{\Delta})$ almost surely. 
    
    Hence, for sufficiently small $\e>0$, we have $\overline{w}_1^\e(y) < w_1^\e(y)$: on $C_1^y$, where the unbranched balayage is harmonic, it is strictly dominated by the unbranched envelope. Thus the unbranched envelope cannot be the value function, since the latter is harmonic on its non-contact set.    
\end{enumerate}

\subsubsection{Iterative relaxation of constraints}

The failure identified in Section \ref{sec:fail} shows that the class of unbranched harmonic majorants is too restrictive, in that it does not capture superharmonic majorants strictly below the envelope $w_1$. Since the latter is defined as the pointwise infimum over an admissible class of harmonic functions, it is natural to seek an enlargement of this class by relaxing the admissibility constraints.

A first indication of how such a relaxation may be achieved is obtained by localising the constraint to $C_1$. Let $h$ denote the balayage of $w_1$ onto $C_1$, given by
\[
h(y) = \E\big[w_1(X^y_{\tau_{\partial C_1}})\big], \qquad y \in C_1.
\]
Then $h$ is harmonic on $C_1$, satisfies $h \leq w_1$, and coincides with $w_1$ on $\partial C_1$. By the Poisson representation, $h(y)$ is the expected payoff obtained by running Brownian motion until exit from $C_1$, followed by continuation according to $w_1$. In particular, $h$ corresponds to a two-stage stopping procedure in which the initial stage, that restricted to $C_1$, is required to dominate $g$.

This leads to the following construction. We enlarge the admissible class by allowing such compositions, and consider functions that are harmonic on a base domain, with boundary values prescribed recursively by elements of $\mathcal{H}_1$. These \emph{branched harmonic majorants} of depth $1$ encode compositions of exit distributions.

Defining $w_2$ as the pointwise infimum over all branched harmonic majorants of depth $1$ that dominate $g$, we obtain $w_2 \leq w_1$. Since the associated non-contact set $C_2$ satisfies $C_2 \subset C_1$, the constraint on the initial stage is further localised, yielding a further relaxation of the admissibility constraint.

Iterating this construction yields a decreasing sequence $(w_n)_{n \in \mathbb{N}}$ of majorants of $g$. The limit
\[
w_\infty := \inf_{n \in \mathbb{N}} w_n
\]
is again a majorant of $g$, and its balayage constitutes a natural candidate for the value function.

\begin{remark}\label{rem:rem2}
\begin{enumerate}
    \item Since the gain $g^\e$ is discontinuous it does not satisfy 
    Assumption \ref{ass:breg}. However it is clear that a sufficiently fine mollification $g^\e_\text{moll}$ of $g^\e$ provides a continuous gain with the desired property that $\overline{w}_1 \neq w_1$.
    \item The pathology displayed in this example arises from the low capacity of the spike, rather than from boundary irregularity. By rotational symmetry, the minimal boundary for the mollified gain $g^\e_\text{moll}$ is a union of spheres and hence smooth. The polar set $\{0\} \subset \Lambda$ plays only an indirect role in the pathology. 
\end{enumerate}
\end{remark}

\subsection{One-sided excessivity}\label{sec:analytic}

The following result may be seen as a one-sided excessivity property.

\begin{lemma}\label{lem:parti}
\begin{enumerate}
\item \label{part1a}
Let $x\in\Lambda$, $\tau\in\mathscr{T}$ and $h\in\mathcal{H}_1$ with $h\ge g$ and
$x\in\overline{d(h)}$. Then
\begin{enumerate}
\item \label{partai} $h(x)\ge \mathbb{E}[g(X^x_\tau)]$,
\item \label{partaii} $h(X^x_{\tau\wedge\tau_h})\ge g(X^x_\tau)$ almost surely,
where $\tau_h$ is the first hitting time of $\partial h$ by $X^x$.
\end{enumerate}
\item \label{part3}
If $w_1(x)=g(x)$, then $w_1(x)=V(x)$.
\end{enumerate}
\end{lemma}

\begin{proof}
Let $\tau \in \mathscr{T}$, $h \in \cH_1$, $h \geq g$, and $x \in \overline{d(h)}$. Recall Section \ref{sec:dipo} and write
\[
h(X^x_{\tau \wedge \tau_h}) = h(X^x_{\tau_h}) \1_{\tau_h < \tau} + h(X^x_{\tau}) \1_{\tau_h \geq \tau}.
\]
For part \eqref{partaii}, if $\tau_h < \tau$ then $X^x_{\tau_h} \in \Lambda$, so  $h(X^x_{\tau_h}) = g^*$ (cf. \eqref{eq:defch1}); otherwise appeal to $h \ge g$. 
Optional sampling applied to $h(X^x_{\tau \wedge \tau_h})$ gives part \eqref{partai}.

For part \eqref{part3}, let $\tau \in \mathscr{T}$ and suppose that $w_1(x) = g(x)$. Then for each $\e > 0$ there exists $h \in \cH_1$ such that $h \geq g$ and $h(x) < w_1(x) + \e$. Note that $x \in \overline{d(h)}$ since $h(x) < \infty$, and $h=\infty$ off $\overline{d(h)}$. Then appealing to part \eqref{part1a}, we have
\begin{align*}
    g(x) &= w_1(x) > h(x) - \e \geq \E(g(X_\tau^x)) - \e,
\end{align*}
so that $g(x) \geq V(x)$, and since $g(x) \le V(x)$ trivially we have $g(x)=w_1(x)=V(x)$.
\end{proof}

\subsection{Balayage and limit envelope}
Each $x$ in the unbranched non-contact set $C_1$ lies in a connected component $C_1^x$
with boundary $\pd C_1^x$. It is technically convenient to extend this idea to the unbranched contact set by setting 
\begin{align}\label{eq:trivb}
   \pd C_1^x := \{x\}, 
   \qquad x \notin C_1.
\end{align}
For $n \in \overline{\mathbb{N}}$, write $\tau_n$ for stopping at the $n$th boundary:
\begin{align}\label{eq:taust}
\tau_n := \inf\{t \geq 0: X_t^x \in \pd C_n^x\},
\end{align}
and define the {\em $n$th balayage} $\overline{w}_n$ to be the average gain over the $n$th boundary exit measure:
\begin{align}\label{eq:defwb}
    \overline{w}_n(x) = \E\left(g\left(X^x_{\tau_n}\right)\right).
\end{align}
Note that on the $n$th contact set, the $n$th balayage is simply the gain. 

It is convenient to introduce a canonical continuous regularisation $h^0$ of each branched harmonic function $h$. This is obtained by translating each harmonic patch of $h$ upward by a (patch-dependent) constant so that the matching error vanishes identically on the branching interfaces. The regularised majorant $h^0$ has zero matching error on all branching interfaces while remaining within a controlled additive error of the original $h$.

\begin{lemma}[Continuous regularisation]\label{lem:cut}
Suppose that $n \geq 2$ and that $h \in \cH_n$ satisfies $h \geq g$. Then there exists a branched harmonic majorant $h^0 \in \cH_n$ such that $\|h^0\|=0$, $h^0 \geq g$, and
\[
h \leq h^0 \leq h + \|h\| \quad \text{on } d(h).
\]
\end{lemma}

\begin{proof}
    We argue by induction on $n$. The case $n=1$ is immediate, taking $h^0 = h \in \cH_1$. Suppose the claim holds for $n=k \geq 1$, and let $h = (h_{k+1}, \kappa) \in \cH_{k+1}$ satisfy $h \geq g$.

    Fix $v \in \partial_0(h)$. Then $\kappa_v \in \cH_k$ with $\kappa_v \geq g$. Let $\kappa_v^0 \in \cH_k$ be its continuous regularisation, so that $\|\kappa_v^0\|=0$, $\kappa_v^0 \geq g$, and $\kappa_v \leq \kappa_v^0 \leq \kappa_v + \|\kappa_v\|$ on $d(\kappa_v)$.

    Define $\Delta^0(h), \e_v \geq 0$ by
\begin{align}
\Delta^0(h) &= \sup_{u \in \partial_0(h)} |h(u) - \kappa_u^0(u)|, \label{eq:d0h}\\
\e_v &= h(v) + \Delta^0(h) - \kappa_v^0(v), \notag
\end{align}
    and let $\k_v^c$ be the upward translation of $\kappa_v^0$ by $\e_v$ (so that 
    $\k_v^c = \k_v^0 + \e_v \in \cH_k$). Then $\k^c: v \mapsto \k_v^c \in K_k^h$ and we may write
    \[
    h^0 := (h_{k+1} + \Delta^0(h), \kappa^c) \in \cH_{k+1}.
    \]
    Since $\e_v \geq 0$, we have $\kappa_v^c \geq \kappa_v^0 \geq g$, and hence $h^0 \geq g$. Moreover, $\Delta(h^0)=0$ and $\|\kappa_v^c\|=0$, so $\|h^0\|=0$. Finally, \eqref{eq:d0h} and the inductive hypothesis yield
    \[
    0\leq\Delta^0(h)\leq\Delta(h)+\sup_{u\in\partial_0(h)}\|\kappa_u\|=\|h\|,
    \]
    which completes the proof.
\end{proof}

The next result will be used in the proof of Lemma \ref{lem:hogg}. 

\begin{lemma}\label{lem:wiequiv}
    The limit envelope $w_\infty$ coincides with the envelope over $\cH_\infty$. That is, for each $x\in\Lambda$ we have
    \begin{align}\label{eq:lrhs}
\inf_{n \in \mathbb{N}} w_n(x) = \lim_{\e \to 0}\inf\{h(x):\,h\in \cH_\infty,\,h\ge g,\ \|h\|<\e\}.
    \end{align}
\end{lemma}

\begin{proof}
For each $n \in \mathbb{N}$, $x \in \Lambda$ and $\de>0$ there exists $h \in \cH_n$ with $h \geq g$, $\|h\|<\de$ and $h(x) < w_n(x) + \de$. The continuous regularisation $h^0 \in \cH_n$ of $h$ satisfies $\|h^0\|=0$, and 
    \[
\lim_{\e \to 0}\inf\{h(x):\,h\in \cH_\infty,\,h\ge g,\ \|h\|<\e\}
\leq h^0(x) \leq w_n(x) + 2\de,
\]
so the left hand side of \eqref{eq:lrhs} dominates the right. 

For the reverse inequality, take $x \in \Lambda$ and $h\in \cH_\infty$ with $h(x)<\infty$, $h\ge g$ and $\|h\|<\e$. Then $h\in \cH_k$ for some $k \in \mathbb{N}$; we claim that $w_k \leq h+\e$. For $k = 1$ the claim follows immediately from \eqref{eq:w1}, so let $k \geq 2$. 
Since the continuous regularisation $h^0 \in \cH_k$ of $h$ satisfies $\|h^0\|=0$, \eqref{eq:nthenv} gives 
\begin{multline*}
\inf_{n \in \mathbb{N}} w_n(x) \leq w_k(x) \leq h^0(x) \leq h(x) + \e \\
\leq \inf\{h(x):\,h\in \cH_\infty,\,h\ge g,\ \|h\|<\e\} + \e,
\end{multline*}
and taking $\e \to 0$ completes the proof.
\end{proof}

\section{Main results}
\label{sec:branching}

Our main result is the following.

\begin{theorem}\label{pro:representation_4param_s}
Under Assumption \ref{ass:breg}, if the limit balayage $\ow_\infty$ 
is globally Lipschitz then
\begin{align}
    V = \ow_\infty,
\end{align}
$V$ is continuous, and the first hitting time $\tau^\infty$ of the limit boundary is an optimal stopping time.
\end{theorem}

These results are proved in Section \ref{sec:proofs}, following preliminaries in Sections \ref{sec:pea} and \ref{sec:esp}.

\subsection{Pathwise extension algorithm}
\label{sec:pea}

We present Algorithm \ref{alg:2}, a procedure that constructs, for each branched majorant and each Brownian sample path, a contiguous sequence of harmonic patches $h_k$ until termination (either by hitting the outer boundary $\partial \Lambda$ or reaching the upper gain bound $g^*$). Recalling Section \ref{sec:dipo}, this yields a sequence of martingales $(h_k(X_{t \wedge \tau_{h_k}}))_{t \geq \tau_{h_{k+1}}}$, each defined on $[\tau_{h_{k+1}},\tau_{h_k}]$, which pointwise dominate the gain function along each path. By approximately value-matching at boundary-contact points (in the uniform sense of Definition \ref{def:pb3}\ref{def:me}), these martingales establish the optimality of the limit envelope. The sequence of patches, which is measurable by Remark \ref{rem:meas}, is illustrated schematically in Figure~\ref{fig:1}. 

Fix $n\ge2$, and let $h=(h_n,\kappa_{n-1})\in\mathcal{H}_n$ with $x\in d(h_n)$.  The path $X^x$ evolves initially in the domain $d(h_n)$,
which may be a proper subset of $\Lambda$ and therefore only covers an initial segment of the trajectory. Let
\[
\tau_{h_n}, \qquad v_n=X^x_{\tau_{h_n}}\in\partial h_n,
\]
be the first exit time and exit location.  If $v_n$ lies on the interior boundary $\partial_0(h_n)$, the branched extension $\kappa_{n-1}$ provides a successor patch
\[
\kappa_{n-1}(v_n)
=
\begin{cases}
(h_{n-1},\kappa_{n-2})\in\mathcal{H}_{n-1}, & n>2,\\[0.4em]
h_1\in\mathcal{H}_1, & n=2,
\end{cases}
\]
which approximately value-matches $h_n$ at $v_n$. This allows the trajectory to continue within the harmonic patch $h_{n-1}$, so that $\tau_{h_{n-1}}$ is well defined; otherwise, if $v_n$ lies on the exterior boundary $\partial(h_n) \setminus \partial_0(h_n)$, set $\tau_{h_{n-1}} = \infty$. Iterating this construction yields a sequence of states $Z_k = (h_k, \tau_{h_k}, v_k) \in \mathcal{H}_0 \times \mathscr{T} \times \overline{\Lambda}$, $k = n, n-1, \dots,$ representing the joint evolution of patch and path across the branched structure. The sequence $(Z_n)$ terminates at the random index $n_e$ at which $X^x$ first exits $h_{n_e}$ through the exterior boundary $\pd(h_{n_e}) \setminus \pd_0(h_{n_e})$. For convenience we set $n_e=1$, as these conditions also characterise exit from harmonics in the class $\mathcal{H}_1$. Since the family $\{\mathcal{H}_n\}_{n\ge 1}$ is nested, the indices in the sequence $(Z_n)$ need not be consecutive. We do not explicitly track this in the notation, as only the terminal state 
$Z_1$ is used.

\begin{algorithm}[ht]
\caption{An explicit harmonic extension procedure originating from the base patch.}
\label{alg:2}
\begin{algorithmic}[1] 
\State \textbf{Initialise:} Select a branched majorant $(h_n, \kappa_{n-1}) \in \mathcal{H}_n$ and set $i = n$. 
\While{$i > 1$}
    \State Let $\tau_{h_i}$ be the first exit time from $d(h_i)$ after $\tau_{h_{i+1}}$ (with $\tau_{h_{n+1}}=0$).
    \If{$h_i(X_{\tau_{h_i}}) = g^*$ \textbf{or} $X_{\tau_{h_i}} \in \partial \Lambda$}
        \State \Return the sequence $(h_n, \dots, h_i)$.
    \EndIf
    \State Let $i^*$ be the minimal index such that $\kappa_{i-1}(X_{\tau_{h_i}}) \in \mathcal{H}_{i^*}$.
    \If{$i^* > 1$}
        \State set $(h_{i^*}, \kappa_{i^*-1}) = \kappa_{i-1}(X_{\tau_{h_i}})$,
        \ElsIf{$i^* = 1$}
            \State set $h_{i^*} = \kappa_{i-1}(X_{\tau_{h_i}})$.    
    \EndIf
    \State Update $i \gets i^*$.
\EndWhile
\State \Return the sequence $(h_n, \dots, h_1)$.
\end{algorithmic}
\end{algorithm}

\begin{remark}\label{rem:cap}
The components of Figure~\ref{fig:1} are interpreted as follows:
\begin{itemize}
\item \emph{Upper panel:} Two sample paths (solid and dotted) are illustrated. For the solid path, the contiguous sequence of domains $d(h_3)$, $d(h_2)$, and $d(h_1)$ returned by Algorithm~\ref{alg:2} is displayed, along with the successive exit locations $v_3, v_2, v_1$. Writing $h=(h_3,\kappa_2)$, the recursive extensions satisfy $(h_2,\kappa_1)=\kappa_2(v_3)$ and $h_1=\kappa_1(v_2)$. For the dotted path, the domains $d(h_3), d(h_2^d), d(h_1^d)$ and exit locations $u_3, u_2, u_1$ are shown. These domains cover the sample path until truncation occurs.

\item \emph{Lower panel:} A cross-section of a branched  harmonic
$h\in\mathcal{H}_3$ is shown, together with the gain function
$g$.  Each harmonic patch $h_i$ dominates $g$, and distances $d_1,d_2$ are
controlled by the Lipschitz extension bound of Lemma~\ref{lem:kp}.
\end{itemize}
\end{remark}

\subsection{Estimates and stability properties}
\label{sec:esp}
The price to be paid for the flexible geometry of branched harmonic majorants is that they have no Harnack-type inequality, and so must be controlled using other estimates. In this section, Proposition~\ref{lem:parti1} generalises Lemma \ref{lem:parti} from harmonic majorants to branched harmonic majorants, Lemma \ref{lem:kp} stabilises the branched structure using the Lipschitz bound $M$, and Lemma \ref{lem:wncts} establishes continuity of the envelopes across free boundaries.

\begin{proposition}\label{lem:parti1}
\begin{enumerate}
    \item \label{eq:htind1} 
    Let $x\in\Lambda$, $\tau\in\mathscr{T}$, $n \geq 2$ and $h \in \cH_n$ with $h\ge g$ and $x\in\overline{d(h)}$. Then
    \[
    h(x) \geq \E[g(X^x_{\tau})] - \|h\|.
    \]
    \item \label{part31} For all $n \in \Nb$, if $w_n(x)=g(x)$ then $w_n(x)=V(x)$.
\end{enumerate}
\end{proposition}
The proof uses the following probabilistic result, which conditions on the sequence of exit locations $v_n$ returned by Algorithm \ref{alg:2}:
\begin{lemma} \label{part2b}
Let $x \in \Lambda$, $\tau \in \mathscr{T}$, $n \geq 1$, $h \in \cH_n$, $h \geq g$ and $x \in \overline{d(h)}$. Let $h_n, \ldots, h_1$ be the sequence from Algorithm \ref{alg:2}, with $\tau_k := \tau_{h_k}$, $v_k := X^x_{\tau_k}$ and $\cF_k := \cF_{\tau_k}$. If $h = (h_n, \k) \in \cH_n$, $n \geq 2$, then for $k = 1, \ldots, n-1$ we have
        \begin{multline}\label{eq:htind0}
        \E[g^* \1_{h_{k+1}(v_{k+1})=g^*} + h_k(X^x_{\tau \wedge \tau_k})\1_{h_{k+1}(v_{k+1})<g^*} | \cF_{k+1}] \1_{\tau_{k+1} < \tau} \\ \geq \E[g(X^x_{\tau}) - \sum_{j=1}^k \Delta(h_j)| \cF_{k+1}] \1_{\tau_{k+1} < \tau}.
        \end{multline}
\end{lemma}
\begin{proof} Suppose that $\tau_{k+1} < \tau$. Then $\tau_{k+1} < \infty$, so that $X^x$ activates the patch $h_{k+1}$ prior to time $\tau$. Also, $X^x$ exits the domain $d(h_{k+1})$ through $\Lambda$ (since exiting through $\pd \Lambda$ would imply $\tau_{k+1} = \tau = \tau_{\pd \Lambda}$). Thus, if $h_{k+1}(v_{k+1})<g^*$ then $v_{k+1} \in \pd_0(h_{k+1})$, so that the patch $h_k$ is also activated prior to time $\tau$. For $i\in\{2,\dots,n\}$ write
\[
S^{i} := \{\omega\in\Omega : \tau_{i}<\tau,\; h_{i}(v_{i})<g^{*}\}.
\]
Since
\begin{multline*}
        \E[g^* \1_{h_{k+1}(v_{k+1})=g^*}| \cF_{k+1}] \1_{\tau_{k+1} < \tau} \\
 > \E[(g(X^x_{\tau})- \sum_{j=1}^k \Delta(h_j))\1_{h_{k+1}(v_{k+1})=g^*} | \cF_{k+1}] \1_{\tau_{k+1} < \tau},
\end{multline*}
it suffices to establish \eqref{eq:htind0} with these terms subtracted, that is we prove:
\begin{align}\label{eq:htind2}
        \E[h_k(X^x_{\tau \wedge \tau_k}) | \cF_{k+1}] \1_{S^{k+1}} \geq \E[g(X^x_{\tau}) - \sum_{j=1}^k \Delta(h_j)| \cF_{k+1}] \1_{S^{k+1}}.
        \end{align}

We proceed by induction on $k$. Assume that \eqref{eq:htind2} holds for indices $1,\ldots,k$ with $k<n-1$, and consider the index $k+1$. On $S^{k+1}$ we have
\begin{align*}
h_{k+1}(X^{x}_{\tau_{k+1}})\1_{S^{k+1}}
&= h_{k+1}(v_{k+1})\1_{S^{k+1}} \\
&\ge 
\big(h_k(v_{k+1}) - \Delta(h_{k+1})\big)\1_{S^{k+1}} \\
& = \big(\E\left[h_{k} \left(X^{v_{k+1}}_{\tau \wedge \tau_k}\right)\right] - \Delta(h_{k+1})\big)\1_{S^{k+1}} \\
& = \big(\E\bigl[h_{k}(X^{x}_{\tau\wedge\tau_{k}})- \Delta(h_{k+1})\mid\cF_{k+1}\bigr]\big)\1_{S^{k+1}} \\
& \ge\ \E\bigl[g(X^{x}_{\tau})- \sum_{j=1}^{k+1}\Delta(h_j)\mid\cF_{k+1}\bigr]\1_{S^{k+1}},
\end{align*}
where the third line uses optional sampling, the fourth uses the strong Markov property at time $\tau_{k+1}$, and the last applies the inductive hypothesis at level $k$.

Using this fact, on $S^{k+2}$ we have
\begin{align}\label{eq:start}
&\E\bigl[h_{k+1}(X^{x}_{\tau\wedge\tau_{k+1}})\mid\cF_{k+2}\bigr]\1_{S^{k+2}} \\
&\quad = \E\Bigl[
     h_{k+1}(X^{x}_{\tau})\1_{\{\tau\le\tau_{k+1}\}}
   + g^*\1_{\{h_{k+1}(v_{k+1})=g^{*},\ \tau_{k+1}<\tau\}} \notag\\
&\hspace{6em} + h_{k+1}(X^{x}_{\tau_{k+1}})\1_{S^{k+1}}  \,\Big|\,\cF_{k+2}\Bigr]\1_{S^{k+2}} \notag\\
&\quad \ge \E\Bigl[
     g(X^{x}_{\tau})\1_{\{\tau\le\tau_{k+1}\}}
   + g^{*}\1_{\{h_{k+1}(v_{k+1})=g^{*},\ \tau_{k+1}<\tau\}} \notag\\
&\hspace{6em}
   + \E\bigl[g(X^{x}_\tau)- \sum_{j=1}^{k+1}\Delta(h_j)\mid\cF_{k+1}\bigr]\1_{S^{k+1}}
   \,\Big|\,\cF_{k+2}\Bigr]\1_{S^{k+2}} \notag\\
& \quad \geq \E\Bigl[g(X^x_\tau)- \sum_{j=1}^{k+1} \Delta(h_j)\Big|\cF_{k+2}\Bigr]\1_{S^{k+2}},
     \notag
    \end{align}
where the first inequality applies the above fact and the second uses the tower property. The case $k=1$ of \eqref{eq:htind2} follows immediately from part \ref{partaii} of Lemma \ref{lem:parti} and the fact that $\Delta(h_1)=0$ for all $h_1 \in \cH_1$.
\end{proof}

{\it Proof of Proposition \ref{lem:parti1}.} For part~\ref{eq:htind1}, take $k=n-1$ in \eqref{eq:start} and repeat the same argument, now with unconditional expectations instead of conditioning; the identical estimates and optional sampling yield
\[
h(x) = h_n(x) = \E[h_n(X^{x}_{\tau\wedge\tau_n})] \ge \E\Bigl[g(X^{x}_{\tau})- \sum_{j=1}^k \Delta(h_j)\Bigr]  \ge \E[g(X^{x}_{\tau})] - \|h\|.
\]

Part~\ref{part31} follows by the same argument as in Lemma~\ref{lem:parti}, taking $h \in \cH_n$ with $h \geq g$, $\|h\|<\e$ and $h(x)<w_n(x)+\e$, and using the inequality $h(x) > \E[g(X^{x}_{\tau})] - \e$ from part~\eqref{eq:htind1}. 
\hfill$\Box$

\medskip
The next result provides a Lipschitz extension bound for branched harmonic majorants. In Lemma \ref{lem:wncts} it is used to prove the continuity of the $n$th envelope $w_n$; in Lemma \ref{lem:hogg} it enables the transfer of an extension map from one boundary to a smooth approximating boundary. The bound is derived from the Lipschitz constant $M$ on $\mathcal{H}_0$ together with the uniform error bound $\|h\|$. Note that the extension $\kappa$ constructed in Lemma \ref{lem:kp} differs from the branched extensions of Definition \ref{def:pb3}, in that it is defined on a full neighbourhood $B(x;\e_1)$ rather than only on the interior boundary.

\begin{lemma}[Lipschitz extension bound] \label{lem:kp}
    Let $n \geq 1$, $h \in \cH_n$, $x \in d(h)$, $\de = d_H(x, \pd \Lambda)$, 
    $h \geq g$, $h(x) < g^*$, $\|h\| < \e$, and $\e, \e_1 > 0$ with
    \begin{align}\label{eq:Dlem3}
    \e < g^*-h(x), \qquad \e_1 < \de \wedge \frac {g^*-h(x)-\e} M.
    \end{align}
    Then there exists a branched extension $\k:B(x;\e_1) \to \cH_{n}$, $u \mapsto \k_u$, such that:

\medskip
For all $u \in B(x;\e_1)$ we have $u \in d(\k_u)$, $\k_u \geq g$, $|\k_u(u) - h(x)| < M \e_1 + \e$ and $\|\k_u\| < \e$.
\end{lemma}

\begin{proof} 

If $n=1$, then $h$ is unbranched. Since \eqref{eq:Dlem3} implies $\e_1 < d_H(x,\partial\Lambda)$ and $h(x) + M \e_1 + \e < g^{*}$, it follows from \eqref{eq:defch1} and Definition \ref{def:pb3} that $B(x;\e_1) \subset d(h)$. Hence, it suffices to set $\kappa_u := h$ for all $u \in B(x;\e_1)$.

Suppose that $n>1$. For $u = x$, set $\k_u := h$. Fix $u \in B(x;\e_{1}) \setminus \{x\}$. Apply Algorithm~\ref{alg:2} along the deterministic path given by the half-line from $x$ through $u$, returning the harmonic patches $h_n, \ldots, h_1$. As in the case $n=1$, the boundary of the final patch $\pd h_1$ cannot be reached prior to reaching $u$. Hence the algorithm cannot terminate prior to reaching $u$, and $u$ must lie in the domain of one of the patches $h_j$. Since $\mathcal{H}_{j}\subset \mathcal{H}_{n}$, setting $\k_{u}:=h_j$ yields the required extension. 

By construction, $\k_u = h_j \in \mathcal{H}_n$ satisfies $\k_u \geq g$, $\|\k_u\| \leq \|h\| < \e$, and the Lipschitz bound $|\k_u(u) - h(x)| < M\e_1 + \e$. This completes the proof.
\end{proof}

\begin{lemma}\label{lem:wncts}
Let $n \in \Nb$. The $n$th envelope $w_n$ is continuous on $\Lambda$.
\end{lemma}

\begin{proof} 
On the interior of $(C_n)^c$, the continuity of $w_n$ follows from that of $g$. Fix $x$ in the interior of $C_n$. For each $\e>0$ there exists $h \in \cH_n$ with $h \geq g$, $\|h\|<\e$ and  $h(x) < w_n(x) + \e$. By appealing to the branched extension on $\pd_0(h)$ if necessary, we may assume that $x \in d(h)$; also, since $w_n \leq \bar g < g^*$ we may assume that $h(x) < g^*$. Thus the conditions of Lemma \ref{lem:kp} hold for sufficiently small $\e, \e_1$, yielding a branched extension $\k$ on $B(x;\e_1)$ whose  Lipschitz extension bound establishes continuity of $w_n$ at $x$.

It remains to study the continuity of $w_n$ across the $n$th boundary $\partial C_n$. For this, fix $n \in \Nb$, let $x \in \Lambda \cap \partial C_n$, and set $\de = d_H(x, \pd \Lambda)$. Choose $\e < \frac 1 2 (g^*- \bar g)$ and $\e_1 < (\frac \de 2 \wedge \frac {g^*- \bar g-2\e} M)$. There exists $v_{\e_1} \in B(x;\frac{\e_1}{2}) \cap (C_n)^c$ and $h_{\e_1} \in \cH_n$ satisfying $h_{\e_1} \geq g$, $\|h_{\e_1}\| < \e$, and 
\[
h_{\e_1}(v_{\e_1}) < g(v_{\e_1}) + \e < \bar g + \e < g^* - \e.
\]
Then $\e < g^* - h_{\e_1}(v_{\e_1})$ and $\e_1 < d_H(v_{\e_1}, \pd \Lambda) \wedge \frac {g^*-h_{\e_1}(v_{\e_1})-\e} M$. Apply Lemma~\ref{lem:kp} with $h=h_{\e_1}$ and $x=v_{\e_1}$ to obtain a corresponding extension 
\[
\k^{\e_1}: B(v_{\e_1}; \e_1) \to \cH_n, \quad y \mapsto \k^{\e_1}_y.
\]
For any $y \in B(x;\frac{\e_1}{2})$, the Lipschitz extension bound gives
\[
w_n(y) \leq (\k^{\e_1}_y)^0(y) \leq \k^{\e_1}_y(y) + \e 
\leq g(v_{\e_1}) + M \e_1 + 3 \e.
\]
Taking the limit as $\e, \e_1 \to 0$ yields $\limsup_{y \to x} w_n(y) \leq g(x)$ and $w_n(x) = g(x)$. Since $w_n \geq g$ and $g$ is continuous, we have $\liminf_{y \to x} w_n(y) \geq g(x)$. It follows that $\lim_{y \to x} w_n(y) = g(x) = w_n(x)$, establishing continuity at $\pd C_n$.
\end{proof}

\subsection{Proof of main results}\label{sec:proofs}

We begin with some required lemmas. The following standard approximation result may be obtained by mollifying the signed distance function to $\partial A$ and considering suitable 
inner sublevel sets (see \cite{evans2015measure}).

\begin{lemma}\label{lem:smooth}
Let $A \subset \Lambda$ be open. Then there exists $\delta_0 > 0$ such that for every $\delta \in (0,\delta_0)$ there exists an open set $A_\delta \Subset A$ with $C^\infty$ boundary satisfying
\[
d_H(\partial A, \partial A_\delta) < \delta,
\]
where $d_H$ denotes the Hausdorff distance in $\Lambda$.
\end{lemma}

\medskip

Using the branched structure, the next result establishes that the limit balayage majorises the limit envelope and, therefore, majorises the gain.

\begin{lemma}
\label{lem:hogg} 
If the limit balayage $\ow_\infty$ is globally Lipschitz continuous then it dominates the limit envelope:
\begin{align}\label{eq:limdom}
\overline{w}_\infty\ge w_\infty. 
\end{align}
\end{lemma}

\begin{proof} We shall arrive at a contradiction by constructing a branched harmonic which both dominates the gain and coincides with it at some $x^* \in C_\infty$, implying that $x^* \notin C_\infty$. The construction has two stages: firstly Lemma \ref{lem:smooth} provides 
a base domain with smooth boundary, $\gamma^\e$, approximating $C_\infty$ (whose boundary regularity is a priori unknown); secondly, Lemma \ref{lem:kp} develops an extension map $\k$ at $\pd C_\infty^x$ and transfers it to $\pd \gamma^\e$.
 
By the comment following \eqref{eq:defwb}, the inequality \eqref{eq:limdom} holds with equality on the limit contact set. We therefore take $x \in C_\infty$. For the base domain, choose $\e \in (0, \text{dist}(x, \pd C^x_\infty))$ and apply Lemma \ref{lem:smooth} to $C_\infty^x$ to obtain an approximating subset $\gamma^\e \Subset C_\infty^x$ with smooth boundary satisfying $d_H(\pd C_\infty^x, \pd \gamma^\e)<\e$. By harmonicity the patch $h_{\overline{w}_\infty}^{\gamma^\e}$ and the balayage $\overline{w}_\infty$ coincide on $\overline{\gamma^\e}$, since they coincide on its boundary. Take $h_{\overline{w}_\infty}^{\gamma^\e}$ as the base patch.

Suppose, for contradiction, that the limit balayage does not dominate the gain:
\begin{align}\label{eq:Delta}
\Delta := \sup_{u \in C_\infty^x} \bigl(g(u) - \overline{w}_\infty
(u)\bigr) > 0.
\end{align}
Since $\overline{w}_\infty = w_\infty=g$ on the boundary $\partial C_\infty^x$ (Lemma \ref{lem:wncts}), 
and $\ow_\infty$ is continuous by assumption, the supremum $\Delta$ is attained at some $x^* \in C_\infty^x$. We may assume that $x^* \in \gamma^\e$ (since $\e>0$ is arbitrary). 

Next, we construct the extension map at $\pd C_\infty$ and transfer it to $\pd \gamma^\e$, which has a finite cover $\{B(x_i;\e)\}_{i \in \mathcal{I}_\e}$, where $\mathcal{I}_\e = \{1,\ldots,n(\e)\}$ and each centre $x_i$ lies in $\partial \gamma^\e$. Fixing $i \in \mathcal{I}_\e$, there exists $u_i \in \partial C_\infty^x \cap B(x_i;\e)$. Writing $\de_i = d_H(u_i, \pd \Lambda)$, we have $\de := \min_{j \in \mathcal{I}_\e} \de_j>0$ (as the support of $g$ is compactly contained in $\Lambda$, and $V$ is trivially strictly positive on $\Lambda$) and may assume that $\e \in (0,\frac{\de}{2} \wedge \frac{g^* - \bar g}{3(M+1)})$. Then since $w_\infty(u_i)=g(u_i)$, there exists $h_{u_i,\e} \in \mathcal{H}_{n_i}$ for some ${n_i} \geq 1$ such that:
\begin{equation}
\label{eq:enet}
h_{u_i,\e} \geq g, \quad \|h_{u_i,\e}\| < \e, \quad \text{and} \quad h_{u_i,\e}(u_i) < g(u_i) + \e.
\end{equation}
Set $N = \max_i n_i$. Applying Lemma~\ref{lem:kp} with the following parameters, we obtain an extension map $\kappa^{(i)} : B(u_i;2\e) \to \mathcal{H}_N$ satisfying, for each $u \in B(u_i;2\e)$:
\begin{equation}\label{eq:l4para}
\kappa_u^{(i)} \geq g, \; \|\kappa^{(i)}_u\| < \e, \; |\kappa^{(i)}_u(u) - 
h_{u_i,\e}(u_i)| < (2M+1)\e.
\end{equation}
The parameters are $n = N$, $x=u_i$, $h = h_{u_i,\e}$ and $\e_1 =2\e$ (the inequality $h(x) < g^*$, and the inequalities \eqref{eq:Dlem3}, follow from \eqref{eq:enet} and choice of $\e$).

We transfer these extensions $\k^{(i)}$, which are defined on $\pd C_\infty^x$-centred balls, to a branched extension $\kappa$ (in the sense of Definition \ref{def:pb3})
of the smoothly bounded patch $\gamma^\e$, as follows. We may assume without loss of generality that $M$ is greater than the Lipschitz constant for $\ow_\infty$. 
Fixing $u \in \partial_0(\gamma^\e) \cap B(x_i;\e) \subset B(u_i;2\e)$, set $\kappa_u := \kappa^{(i)}_u$ and then 
\begin{align*}
|\kappa_u(u) - h_{\ow_\infty}^{\gamma^\e}(u)| &\le 
|\kappa^{(i)}_u(u) - h_{u_i,\e}(u_i)| + |h_{u_i,\e}(u_i) - g(u_i)| \\
& \qquad + |g(u_i) - h_{\ow_\infty}^{\gamma^\e}(u)| \\
&< (2M+1)\e + \e + 2M\e,
\end{align*}
where we respectively use \eqref{eq:l4para}, \eqref{eq:enet} and the equalities $g(u_i) = \ow_\infty(u_i)$ and $h_{\ow_\infty}^{\gamma^\e}(u) = \ow_\infty(u)$.
Thus the branched harmonic $(h_{\overline{w}_\infty}^{\gamma^\e}, \kappa)$ belongs to $\mathcal{H}_{N+1}$ and satisfies $\|(h_{\overline{w}_\infty}^{\gamma^\e}, \kappa)\| < (4M+3)\e$.

Recalling the upward translation of a branched harmonic (\ref{ut}, Section \ref{sec:bpp}), set  $h^\e := (h_{\overline{w}_\infty}^{\gamma^\e}, \kappa) + \Delta$. By \eqref{eq:Delta}, $h^\e \ge g$ and $h^\e(x^*) = g(x^*)$. Then Lemma \ref{lem:wiequiv} gives
\begin{equation}
w_\infty(x^*) \le \lim_{\e \to 0} h^\e(x^*) = g(x^*),
\end{equation}
contradicting $w_\infty > g$ on $C_\infty^x$. We conclude that $\Delta = 0$, implying that  $(h_{\ow_\infty}^{\gamma^\e},\k) \in \cH_{N+1}$ dominates $g$. Recalling that $x \in \gamma^\e$ by choice of $\e$, we have $h_{\ow_\infty}^{\gamma^\e}(x)
= \ow_\infty(x)$ hence
\[
w_\infty(x) \leq \lim_{\e \to 0} h_{\ow_\infty}^{\gamma^\e}(x)
= \ow_\infty(x),
\]
completing the proof.
\end{proof}
    
\medskip 

Next we state special cases of two results from \cite{km}. Lemma \ref{lem:tau3} applies directly in the present setting, while for completeness we include a short proof of Corollary \ref{cor:vvc}.
\begin{lemma} \label{lem:tau3} Let $\tau_1, \tau_2 \in \mathscr{T}$
with $0 \leq \tau_1 \leq \tau_2$ a.s..
Then there exists a stopping time $\tau_3 \in \mathscr{T}$ such that almost surely we have
\begin{align*}
 \E\left[g(X_{\tau_2}^x)|\cF_{\tau_1}\right]
&= \begin{cases}
\E\left(g\left(X_{\tau_3}^{X_{\tau_1}^x}\right)\right), & \{\tau_1 < \tau_2\}, \\
g(X_{\tau_1}^x), & \{\tau_1 = \tau_2\}.
\end{cases}
\end{align*}
\end{lemma}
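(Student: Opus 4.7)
The plan is to split the argument according to whether $\tau_1 = \tau_2$ or $\tau_1 < \tau_2$ and invoke the strong Markov property of Brownian motion at $\tau_1$.

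On the event $\{\tau_1 = \tau_2\}$, the random variable $g(X_{\tau_2}^x)$ coincides with $g(X_{\tau_1}^x)$, which is $\cF_{\tau_1}$-measurable; its conditional expectation therefore equals itself, matching the second branch of the claim.

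For the main case $\{\tau_1 < \tau_2\}$, I would construct $\tau_3$ through the shift operator $\theta_{\tau_1}$ of Section \ref{sec:setting}. Since $\tau_1$ and $\tau_2$ are $\mathbb{F}$-stopping times with $\tau_2 \geq \tau_1$, a standard measurability argument in the canonical path-space setup yields a measurable functional $F : \Omega \to \R_+$, adapted to the natural filtration of the canonical process, such that $\tau_2 = \tau_1 + F \circ \theta_{\tau_1}$ almost surely. For $\PP$-a.e.\ fixed pre-$\tau_1$ history, $F$ is then a stopping time of the Brownian motion restarted at $X_{\tau_1}^x$; and since $\tau_2 \leq \tau_{\pd\Lambda}^x$ by definition of $\mathscr{T}$, the value of $F$ is almost surely bounded above by $\tau_{\pd\Lambda}^{X_{\tau_1}^x}$, so $\tau_3 := F$ is a bona fide random element of $\mathscr{T}$. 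Applying the strong Markov property at $\tau_1$ then gives, on $\{\tau_1 < \tau_2\}$,
\[
\E[g(X_{\tau_2}^x) \mid \cF_{\tau_1}](\omega) \;=\; \E\bigl[g\bigl(X_{\tau_3(\omega)}^{y}\bigr)\bigr]\Big|_{y = X_{\tau_1}^x(\omega)},
\]
which is the first branch of the claim.

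The main technical obstacle is the rigorous construction of $\tau_3$ as an honest random element of $\mathscr{T}$, with the required measurability and the pathwise bound by $\tau_{\pd\Lambda}^{X_{\tau_1}^x}$. This relies on the canonical framework of Section \ref{sec:setting}: continuity of sample paths and right-continuity and completeness of $\mathbb{F}$ permit one to express $\tau_2 - \tau_1$ as a measurable functional of the post-$\tau_1$ trajectory. An alternative, more pedestrian route is to first verify the identity for $\tau_1, \tau_2$ taking finitely many values (where the Markov property reduces it to a discrete calculation on the atoms of $\cF_{\tau_1}$) and then pass to general stopping times by monotone approximation and dominated convergence, using the boundedness of $g$ on its compact support.
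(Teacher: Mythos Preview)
Your overall strategy---split on $\{\tau_1=\tau_2\}$, invoke the strong Markov property at $\tau_1$ on the complement, and realise $\tau_2-\tau_1$ as a stopping time for the restarted process---is exactly the paper's. The gap lies in the decomposition you assert: writing $\tau_2 = \tau_1 + F\circ\theta_{\tau_1}$ for a \emph{single} measurable functional $F\colon\Omega\to\R_+$ is not possible in general, because $\tau_2-\tau_1$ may depend on the pre-$\tau_1$ trajectory and not merely on the shifted path $\theta_{\tau_1}(\omega)$. Your later qualifier ``for $\PP$-a.e.\ fixed pre-$\tau_1$ history, $F$ is then a stopping time'' hints that you see this, but the notation $F\circ\theta_{\tau_1}$ with a fixed $F$ precisely discards that history; as written, the identity fails and $\tau_3:=F$ is a single element of $\mathscr{T}$, not a random one.

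The paper repairs this by an explicit path-splicing construction: for each fixed $\omega$ it builds the map $G^\omega(\bar\omega)$ that follows $\omega$ up to $\tau_1(\omega)$ and then the increments of a fresh path $\bar\omega$, and sets $\tau_3^\omega(\bar\omega):=\tau_2(G^\omega(\bar\omega))-\tau_1(G^\omega(\bar\omega))$. This makes the $\omega$-dependence (through the pre-$\tau_1$ history) explicit, and one then checks that for each fixed $\omega$ the map $\bar\omega\mapsto\tau_3^\omega(\bar\omega)$ is an $\mathbb{F}$-stopping time by comparing the filtration generated by $G^\omega(\bar\omega)$ with that generated by $\bar\omega$. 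That is the substantive content you are glossing as ``a standard measurability argument''; it is exactly where the work lies, and it is what makes $\tau_3$ a genuine \emph{random} element of $\mathscr{T}$ rather than a fixed stopping time. Your discrete-approximation alternative would face the same issue: on each atom of $\cF_{\tau_1}$ the residual time $\tau_2-\tau_1$ already depends on which atom one is in, so one is again building an $\omega$-indexed family of stopping times, and the limiting procedure does not obviously preserve the stopping-time property without the same splicing analysis.
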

\begin{corollary}\label{cor:vvc}
For $x\in \Lambda$, 
we have
\begin{align}\label{eq:vcheck}
V(x) = \check V(x) := \sup_{\tau \in \mathscr{T}, \, \tau \leq \tau_\infty} \E(g(X_{\tau}^x)).
\end{align}
\end{corollary}    

\begin{proof} Let $\tau \in \mathscr{T}$. Setting $\tau_1=\tau \wedge \tau_\infty$ and $\tau_2=\tau$ in Lemma \ref{lem:tau3}, there exists a stopping time $\tau_3$ such that:
\begin{align*}
    \E(g(X_\tau^x)) 
    &= \E(\E[g(X_\tau^x)|\cF_{\tau \wedge \tau_\infty}]) \\
    &= 
    \E\left(\1_{\{\tau \leq \tau_\infty\}}g(X_\tau^x)
    + \1_{\{\tau > \tau_\infty\}}\E\left(g\left(X_{\tau_3}^{X_{\tau_\infty}^x}\right)\right)\right).
\end{align*}
Since $g(X_{\tau_\infty}^x)=V(X_{\tau_\infty}^x)$ at the exit point (part \ref{part31} of Proposition \ref{lem:parti1}), we have
\begin{align}
        \E(g(X_\tau^x)) 
 \leq \E(\1_{\{\tau \leq \tau_\infty\}}
    g(X_{\tau}^x)
    + \1_{\{\tau > \tau_\infty\}}
    g(X_{\tau_\infty}^x))
    = \E(g(X_{\tau \wedge \tau_\infty}^x)), \notag
\end{align}
yielding $V \leq \check V$. As the reverse inequality $\check V \leq V$ is trivial, we have $V = \check V$.
\end{proof}
 
{\it Proof of Theorem~\ref{pro:representation_4param_s}.} 
Lemma \ref{lem:hogg} gives $\overline{w}_\infty\ge w_\infty$.
If $x$ lies in the limit contact set $(C_\infty)^c$, then $g(x)=w_\infty(x)=V(x)$ 
(part \ref{part31} of Proposition \ref{lem:parti1}). On the other hand, if $x \in C_\infty$ 
and $\tau \in \mathscr{T}$ with $\tau \leq \tau_\infty$ then, by the tower and strong Markov properties, the limit balayage satisfies 
\[
\overline{w}_\infty(u) = \E(\overline{w}_\infty(X^u_{\tau})) \geq \E(w_\infty(X^u_{\tau})) 
\geq \E(g(X^u_{\tau})).
\]
Taking the supremum over $\tau \leq \tau_\infty$ gives $\ow_\infty(u) \geq \check V(u) = V(u)$ (Corollary \ref{cor:vvc}). Since by definition
\[
\overline{w}_\infty(u) = \E(g(X^u_{\tau_\infty})) \leq V(u),
\]
we have $V(u) = \overline{w}_\infty(u)$. 
\hfill $\Box$

\medskip

\bibliographystyle{unsrtnat}
\bibliography{bibliography}
\end{document}